\documentclass{CSML} 
\pdfoutput=1 


\lmcsheading{}{1--13}{}{}{Oct.~27, 2016}{May~08, 2017}{}

\usepackage{latexsym,amsmath,amssymb,amsthm}
\usepackage[all]{xy}
\usepackage{enumitem}
\usepackage{color} 
\usepackage{hyperref}

\hypersetup{
  colorlinks=true,
  linkcolor=black,
  citecolor=black,
  filecolor=black,
  urlcolor=black,
}

\newcommand{\ot}{\otimes}

\newcommand{\N}{\mathbb{N}}

\newcommand{\BV}{\mathcal{V}}

\newcommand{\Mod}{\mathbf{Mod}}

\newcommand{\Comon}{\mathsf{Comon}}
\newcommand{\Bimon}{\mathsf{Bimon}}

\newcommand{\op}{\mathsf{op}}
\newcommand{\Cop}{\mathsf{cop}}

\newcommand{\ma}{\mathsf{A}}
\newcommand{\mb}{\mathsf{B}}
\newcommand{\mc}{\mathsf{C}}

\newcommand{\mh}{\mathsf{H}}

\newcommand{\ml}{\mathsf{L}}

\newcommand{\Set}{\mathbf{Set}}

\newcommand{\Alg}{\mathbf{Alg}}
\newcommand{\Ab}{\mathsf{Ab}}

\newcommand{\Mon}{\mathsf{Mon}}

\newcommand{\Bialg}{\mathbf{Bialg}}

\newcommand{\Hopf}{\mathsf{Hopf}}
\newcommand{\Lie}{\mathsf{Lie}}

\newcommand{\lra}{\rightarrow}

\newcommand{\xra}{\xrightarrow}
\newcommand{\im}{\mathrm{Im}\,}

\begin{document}

\title{Hopf and Lie algebras in  semi-additive Varieties}

\author{Hans-E. Porst}
\thanks{Permanent address: Department of Mathematics, University of Bremen, 28359 Bremen, Germany.} 

\address{Department of Mathematical Sciences, University of Stellenbosch,  Stellenbosch, South Africa. }
\email {porst@uni-bremen.de} 

\dedicatory{To  Ji\v r\'i Ad\'amek for his 70 $\!^{th}\!$ birthday}

\begin{abstract}
We study Hopf  and Lie algebras in entropic semi-additive varieties 
with an emphasis on adjunctions related to the enveloping monoid functor and the primitive element functor. These investigations are in part based on the concept of the abelian core of a semi-additive variety  and its monoidal structure in case the variety is entropic. 
\vskip 6pt
\noindent
 {\bf MSC 2010}: {Primary 08B99, Secondary 16T05} \newline
\noindent {\bf Keywords:} { Semi-additive variety; J\'onsson-Tarski variety; entropic variety; Hopf monoid; Lie algebra.}
\end{abstract}
\maketitle

\section*{Introduction}
 The class of entropic varieties, that is, varieties whose algebraic theory is commutative, provides the canonical setting for generalizing classical Hopf algebra theory. By this the following is meant: In every entropic variety one not only can (as in any symmetric monoidal category) express the fundamental concepts of Hopf algebra theory, but also prove a substantial part of  the theory of Hopf algebras.
  This is shown in \cite{Por}, a paper based on  combining results and methods from the theories of varieties,  locally presentable categories,  and  coalgebra, of which  I had the pleasure of learning a lot in my collaboration with Ji\v r\'i Ad\'amek (see e.g. \cite{AP0}, \cite{AP1}and \cite{AP2}).

 But, clearly, there are aspects of this theory which cannot be dealt with in an arbitrary  entropic variety $\BV$. For example, the concepts of primitive element or Lie algebra require that every $\BV$-algebra $A$ is an internal monoid in $\BV$, while the familiar equivalence of the various descriptions of the Sweedler dual of a $k$-algebra depends on the fact that the varieties of $k$-vector spaces are semi-additive.  
 Since these conditions turn out to be equivalent for an entropic variety, it is natural to pay special attention to Hopf monoids in these categories. 

Noting that entropic semi-additive varieties can be viewed  as entropic  J\'onsson-Tarski varieties   or  categories of semimodules over commutative semirings, respectively  (see Proposition \ref{prop:entrJT} below), one observes that this has to some extent been done before in the recent paper \cite{Rom}. 
However, this paper does not deal with the following questions which arise naturally in its context:
\begin{enumerate}
\item Can one generalize the underlying Lie algebra of an algebra and the enveloping algebra of a Lie algebra?
\item Can one generalize the adjunction between bialgebras and Lie algebras, determined by the primitive element functor?
\end{enumerate}
In this note we therefore consider these questions. 

In spite of the title of this note
we avoid talking about Hopf algebras and bialgebras in varieties (except when these are module categories) and prefer the terms Hopf monoid and bimonoid, respectively,  in order to avoid possible confusion,  since the objects of the monoidal categories under consideration in this note already are called algebras.

The paper is organized as follows:

In Section \ref{sec:basics}, after briefly recalling some fundamentals about entropic varieties, we characterize entropic semi-additive varieties as entropic  J\'onsson-Tarski varieties, or, equivalently, as categories of semimodules over commutative semirings. Moreover, we define the abelian core of an  entropic  J\'onsson-Tarski variety and analyze its monoidal structure. This preliminary section is complemented by a couple of results, which will be used later.

Section \ref{sec:env} starts with the  introduction of tensor bimonoids  and Lie algebras in entropic  J\'onsson-Tarski varieties.  We then define a generalization of the familiar underlying Lie algebra of an algebra and show that the respective functor has a left adjoint, as in the case of modules.

 Section \ref{sec:prim} starts with a discussion of primitive elements in a more conceptual way than this is done in \cite{Rom}; for example, the algebra of primitive elements of a bialgebra in an  entropic  J\'onsson-Tarski variety $\BV$  is characterized as an equalizer in the variety
$\BV$. We then construct a generalization of the familiar primitive element functor and analyze its adjunction properties.

\section{Preliminaries}\label{sec:basics}
\subsection{Terminology}
By a variety $\BV$ we mean a finitary one-sorted variety, considered as a concrete category over $\Set$, the category of sets. Up to concrete equivalence, $\BV$ this is the same as the category of product preserving functors $A\colon\mathcal{T}\lra\Set$, where $\mathcal{T}$ is an algebraic theory	 (see e.g. \cite{AR},\cite{Bor}). 
An $n$-ary term, thus, can be thought of  as a $\mathcal{T}$-morphism $t\colon n\lra 1$ and its interpretation in an algebra $A$ is $t^A:= A(t)$. Recall that   $\mathcal{T}$ is equivalent to the dual of the full subcategory of $\BV$ spanned by all finitely generated free $\BV$-algebras $Fn$; thus, one may think of an $n$-ary term as a $\BV$-homomorphism $F1\lra Fn$.
Every variety  is a locally finitely presentable	category. 

Throughout we make use of the following convention: 
Given an element $x$ of a $\BV$-algebra $A$, the $\BV$-homomorphism $F1\lra A$ with $1\mapsto x$ will be denoted by $x$ as well.

\subsection{Entropic varieties}

It is well known (see \cite{Bor} or \cite{Davey}), that every variety	whose theory	is commutative, 
is a symmetric monoidal closed category; following \cite{Davey} we call any such symmetric monoidal closed category   $\BV$ an \em entropic variety\em. The tensor product of $\BV$, called the \em entropic tensor product\em,  is given by universal bimorphisms in the sense of \cite{BN}. In more details, for algebras	$A$ and $B$ in $\BV$ their tensor product $A \ot B$ is characterized	by the fact, that there is a bimorphism $A\times B\xra{ -\ot -} A\ot B$ over which each bimorphism $A\times B\rightarrow C$ factors uniquely as $f = g\circ (-\ot  -)$ with a homomorphism $g\colon A\ot B\rightarrow C$. 
The internal hom-functor of $\BV$ is given by the $\BV$-algebra $[A,B]$ of all $\BV$-homomorphisms from $A$ to $B$, considered as a subalgebra of $B^A$. In an entropic variety, all operations are homomorphisms. 
The variety $_c\mathbf{Monoids}$ of commutative monoids is a paradigmatic example of an entropic variety.

We will make  use of the following  constructions with respect to an entropic variety $\BV$.
\begin{enumerate}
\item $\Mon\BV$, the category of  $\BV$-monoids $\ma=(A,A\ot A\xra{m}A,F1\xra{e}A)$ in $\BV$. $\Mon\BV$ contains the category $_c\Mon\BV$ of commutative $\BV$-monoids as a full reflective subcategory and the latter is an entropic variety.
\item $\mathsf{Sg}\BV$, the category of  $\BV$-semigroups\footnote{By this we mean the obvious generalization of $\BV$-monoids (see \cite{PP}).} $\ma=(A,A\ot A\xra{m}A)$ in $\BV$. 
and $_c\mathsf{Sg}\BV$, the category of commutative  $\BV$-semigroups.
\item $\Comon\BV$,  the category of  $\BV$-comonoids $\mc=(C,C\xra{ \mu}C\ot C,C\xra{ \epsilon}F1)$.
\item $\Bimon\BV$, the category of  $\BV$-bimonoids $\mb = (B,m,e, \mu, \epsilon)$.
\item $\Hopf\BV$, the category of  $\BV$-Hopf monoids $\mh = (H,m,e, \mu, \epsilon,S)$.
\item $\mathsf{Mod}_\ma$,  the category of right $\ma$-modules $(M,M\ot A\xra{l}M)$ in $\BV$, for any $\BV$-monoid $\ma$; this again is an entropic variety, if the monoid $\ma$ is commutative.\end{enumerate}

\subsection{Entropic  semi-additive varieties}

A  variety 
$\BV$ is  called  \em semi-additive \em or \em linear \em if it is enriched	over the monoidal closed category $_c\mathbf{Monoids}$.  Alternatively, these varieties  can be characterized as being 
 pointed (that is, they have a zero object) and having binary biproducts. These are precisely the J\'onsson-Tarski  varieties whose binary J\'onsson-Tarski operation $+$ satisfies the axiom 
\begin{equation}\label{eqn:linear}
t(x_1,\ldots,x_n) + t(y_1,\ldots,y_n) = t(x_1+y_1,\ldots,x_n+y_n)
\end{equation}
for each $n$-ary operation $t$ (see \cite[1.10.8]{BB}). 
In particular,  
$+$  is a homomorphism. An entropic semi-additive variety $\BV$ has  a unique nullary operation $0$,  every $\BV$-algebra $A$ contains $\{0\}$ as a one-element subalgebra and the constant maps with value $0$ are homomorphisms.

Every such variety is equivalent to a category of $S$-semimodules over some semi\-ring\footnote{{In this note by a \em semiring \em is meant what also (and more appropriately) is called a \em rig\em, meaning, a ring without negatives. We only use the traditional notion since it still seems to be more common.}}  $S$. In fact, 
thinking of an $n$-ary operation symbol as a $\BV$-homomorphism $F1\xra{\omega}Fn$ one concludes  that $\omega = f_1+\cdots + f_n$ with endomorphisms $f_1,\ldots, f_n \in S$, the endomorphism monoid  of the free $\BV$-algebra $F1$, 
since $\BV$ has biproducts (see \cite[1.10.8]{BB}).
$S$ also is a commutative monoid, with addition defined pointwise, by enrichment of $\BV$ over $_c\mathbf{Monoids}$  (that is, by the J\'onsson-Tarski operation $+$). Since every endomorphism preserves $+$, $S$ is a semiring. 
Now $S$ acts on the underlying set $|A|$ of a $\BV$-algebra by $s\cdot a:= s^A(a)$ and this makes $A$ an $S$-semimodule, by Equation (\ref{eqn:linear}). 
Consequently, the following holds,
 since every category $\mathbf{SMod}_S$ of $S$-semimodules over some commutative semi\-ring $S$ is well known to be entropic (see e.g. \cite{Gol}).
\begin{prop}\label{prop:entrJT}
The following are equivalent for a variety $\BV$.
\begin{enumerate}
\item $\BV$ is an entropic   semi-additive variety.
\item $\BV$ is an entropic   J\'onsson-Tarski variety.
\item $\BV$ is equivalent to the variety of $S$-semimodules over some commutative semi\-ring $S$.
\end{enumerate}
\end{prop}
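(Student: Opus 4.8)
The plan is to prove the two equivalences (1)$\Leftrightarrow$(2) and (1)$\Leftrightarrow$(3), most of the required ingredients being already assembled in the discussion above. The implication (1)$\Rightarrow$(2) needs no argument: by the characterization recalled above a semi-additive variety is in particular a J\'onsson-Tarski variety, and the entropic hypothesis is common to both conditions. For (2)$\Rightarrow$(1) I would invoke the fact, noted above, that in an entropic variety every operation $t^A\colon A^n\to A$ is a $\BV$-homomorphism. Applying this with respect to the J\'onsson-Tarski operation $+$, which on the product algebra $A^n$ is computed coordinatewise, yields $t^A(x_1+y_1,\ldots,x_n+y_n)=t^A(x_1,\ldots,x_n)+t^A(y_1,\ldots,y_n)$, i.e.\ exactly the linearity axiom~(\ref{eqn:linear}). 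Since a J\'onsson-Tarski variety satisfying (\ref{eqn:linear}) is semi-additive (the case $t=+$ of (\ref{eqn:linear}) is the interchange law, which forces $+$ to be associative and commutative by an Eckmann-Hilton argument), as recalled from \cite{BB}, condition (1) follows.

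For (1)$\Rightarrow$(3) I would build on the objectwise construction already sketched: put $S:=\mathrm{End}_\BV(F1)$, a semiring under pointwise $+$ (from the enrichment) and composition, and make each $\BV$-algebra $A$ into an $S$-semimodule via $s\cdot a:=s^A(a)$. Note that $S$ is in fact commutative, since entropy forces any two unary operations to commute; thus $S$ is a commutative semiring. The remaining task is to upgrade this to a concrete equivalence $\BV\simeq\mathbf{SMod}_S$. Using that the free functor preserves biproducts, so that $Fn\cong(F1)^{\oplus n}$, I would identify $\Hom_\BV(F1,Fn)\cong S^n$ and, more generally, $\Hom_\BV(Fm,Fn)\cong M_{n\times m}(S)$ with composition corresponding to matrix multiplication. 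This exhibits the algebraic theory of $\BV$ (the dual of the full subcategory of finitely generated free algebras) as the theory of finitely generated free $S$-semimodules, which is the theory of $\mathbf{SMod}_S$, and the concrete equivalence follows. Conversely, (3)$\Rightarrow$(1) only requires observing that $\mathbf{SMod}_S$ is semi-additive (its hom-sets are commutative monoids, and it has a zero object and biproducts) and, $S$ being commutative, entropic by \cite{Gol}.

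I expect the main obstacle to be the bookkeeping in (1)$\Rightarrow$(3): one must verify not merely that every $\BV$-algebra carries an $S$-semimodule structure and conversely, but that the identification $\Hom_\BV(Fm,Fn)\cong M_{n\times m}(S)$ is functorial and compatible with composition, so that it is an equivalence of algebraic theories rather than a bijection on hom-sets alone. Once this is checked the concrete equivalence of varieties is immediate. By contrast the conceptually decisive step, (2)$\Rightarrow$(1), is short: it is precisely the observation that entropy turns the bare J\'onsson-Tarski unit $+$ into a linear, hence semi-additive, structure.
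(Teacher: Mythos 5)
Your proposal is correct and follows essentially the same route as the paper: (1)$\Leftrightarrow$(2) via the fact that in an entropic variety every operation $t^A$ is a homomorphism, hence $+$ satisfies the linearity axiom~(\ref{eqn:linear}) characterizing semi-additivity per \cite[1.10.8]{BB}, and (1)$\Leftrightarrow$(3) via $S=\mathrm{End}_\BV(F1)$ acting by $s\cdot a = s^A(a)$, with entropicity of $\mathbf{SMod}_S$ quoted from \cite{Gol}. You merely spell out two points the paper leaves implicit --- the commutativity of $S$ under composition (forced by entropicity on unary terms) and the identification $\Hom_\BV(Fm,Fn)\cong M_{n\times m}(S)$ yielding the equivalence of theories --- which is sound bookkeeping, not a different argument.
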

\begin{exas}
The following varieties are  entropic  semi-additive varieties. 
\begin{enumerate}
\item $\mathbf{Ab}$, the category of abelian groups and, more generally, $\Mod_R$, for any commutative unital ring $R$.
\item $\mathbf{SMod}_S=\mathsf{Mod}_S$, where $S$ is a 
commutative monoid in the entropic variety $_c\mathbf{Monoids}$; this is the category of all $S$-semimodules, for a commutative unital semiring $S$. 
\item $\mathbf{SLat_0}$, the variety of   lower bounded 
semilattices. $\mathbf{SLat}$, the variety of   (join)  
semilattices, is entropic but not semi-additive.
 \item $\mathbf{DLat_0} = {_c\mathsf{Sg}\mathbf{SLat_0}}$, the variety of   lower bounded distributive lattices\footnote{The equations used here follow from \cite{Davey}.}.
\end{enumerate}
\end{exas}

\begin{fact}\label{fact:pi}\rm
Let $A$ and $B$ be algebras in an entropic J\'onsson-Tarski variety $\BV$ and $b\in B$. Then there are homomorphisms
\begin{enumerate}
\item  $b_r\colon A\simeq A\ot F1\xra{ A\ot b}A\ot B$ and  $b_l\colon A\simeq F1\ot A\xra{ b\ot A}B\ot A$,\\
for each $a\in A$ one has $b_r(a) =a\ot b $ and $b_r(a) =b\ot a $;
\item $\bar{b}\colon A\xra{\langle b_r,b_l\rangle}(A\ot B)\times (B\ot A)$ with coordinates $b_r$ and $b_l$,\\
for each $a\in A$ one has $\bar{b}(a) =(a\ot b,b\ot a)$;
\item $\pi_A\colon A\xra{\bar{e} }(A\ot A)\times (A\ot A)\xra{ +}A\ot A$, 
if $(A,m,e)$ is a $\BV$-monoid;\\
for each $a\in A$ one has $\pi_A(a) =a\ot 1 + 1\ot a$.\footnote{We here use the notation $e(1) = 1$ in view of the usual notation in the context of primitive elements.}
\end{enumerate}
\end{fact}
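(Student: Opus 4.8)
The plan is to exhibit each of the four maps as a composite or pairing of maps that are already $\BV$-homomorphisms, so that every homomorphism claim reduces to three standing facts: $\ot$ is a bifunctor, binary products in a variety are computed on underlying sets, and $+$ is a homomorphism. The element-wise formulas then drop out by evaluating these maps.

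For (1), recall that $F1$ is the unit object of the entropic tensor product, so the unitors $A\simeq A\ot F1$ and $A\simeq F1\ot A$ are $\BV$-isomorphisms, in particular homomorphisms. Regarding $b$ as the homomorphism $F1\xra{b}B$ with $1\mapsto b$ (the standing convention), functoriality of $\ot$ makes $A\ot b$ and $b\ot A$ homomorphisms, whence $b_r$ and $b_l$ are homomorphisms as composites. For the formulas I would only need to verify that the unitor $A\simeq A\ot F1$ carries $a$ to $a\ot 1$; this is immediate from the description of $\ot$ by universal bimorphisms, or, via Proposition \ref{prop:entrJT}, from the identification $A\ot_S S\cong A$, $a\ot s\mapsto s\cdot a$. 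Then $(A\ot b)(a\ot 1)=a\ot b(1)=a\ot b$, and symmetrically $b_l(a)=b\ot a$.

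For (2), since $\BV$ is a variety its binary products are created by the forgetful functor to $\Set$, hence computed coordinatewise, and the product $(A\ot B)\times(B\ot A)$ has the expected universal property in $\BV$. The pairing $\langle b_r,b_l\rangle$ of the two homomorphisms of (1) is therefore the unique homomorphism with those coordinates, so $\bar b(a)=(b_r(a),b_l(a))=(a\ot b,b\ot a)$.

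For (3), specialize (2) to $B=A$ and $b=e(1)=1$, giving a homomorphism $\bar e\colon A\to(A\ot A)\times(A\ot A)$ with $\bar e(a)=(a\ot 1,1\ot a)$. It remains to see that $+\colon(A\ot A)\times(A\ot A)\to A\ot A$ is a homomorphism, and this is precisely the semi-additivity hypothesis: Equation (\ref{eqn:linear}) says that every operation commutes with $+$, which is exactly the assertion that $+\colon C\times C\to C$ is a homomorphism for every $\BV$-algebra $C$ (take $C=A\ot A$). Hence $\pi_A$, being the composite of $\bar e$ followed by $+$, is a homomorphism, and $\pi_A(a)=a\ot 1+1\ot a$. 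The whole statement is thus essentially bookkeeping: the only ingredients that are not purely formal are the element-level form of the unitor ($a\mapsto a\ot 1$) and the homomorphism property of $+$, and I expect the former to be the one point worth pinning down, though it is standard, while the latter is delivered directly by the hypothesis through Equation (\ref{eqn:linear}), so no genuine obstacle arises.
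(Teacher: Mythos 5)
Your proposal is correct and coincides with the paper's own (implicit) argument: the paper states this as a \emph{Fact} without further proof precisely because the displayed composites are the proof, and you verify each constituent exactly as intended --- the unitor $A\simeq A\ot F1$, functoriality of $\ot$ applied to the homomorphism $F1\xra{b}B$ from the paper's naming convention, the pairing into a product created over $\Set$, and the homomorphy of $+$ guaranteed by Equation (\ref{eqn:linear}). You also silently correct the statement's typo ($b_r(a)=b\ot a$ should read $b_l(a)=b\ot a$), which is the right reading.
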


\begin{lem}\label{lem:em}
Let  $A$ be an algebra in an  entropic    J\'onsson-Tarski variety $\BV$. Then the following hold.
\begin{enumerate}
\item The triple $(A,A\times A\xra{+^A}A, 0)$ is a commutative  internal monoid in $\BV$ and, thus, a (commutative) monoid. This is the only internal monoid on $A$ by the Eckmann-Hilton argument.
By this construction $\BV$ is isomorphic to the category  of commutative internal monoids in $\BV$ (see \cite[1.10.5]{BB}).
\item An $n$-fold sum $x+\cdots +x$ in  $A$ can be written as $n\cdot x$ with $n$ the $n$-fold sum $1+\cdots +1 \in F1$; that is, $x+\cdots +x$ is the value of $1+\cdots +1$ under the canonical isomorphism $F 1\ot  A \simeq A$. 
\item The variety $\BV$ is isomorphic to the category  $\mathsf{Mod}_\mathsf{F1}$ of modules of the commutative monoid $\mathsf{F1}$ $($see \cite{Davey}$)$. 
\end{enumerate}
\end{lem}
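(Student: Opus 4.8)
The three statements will be reduced to the interchange law (\ref{eqn:linear}) and the Jónsson-Tarski unit laws $x+0=x=0+x$, which hold because $\BV$ is an entropic Jónsson-Tarski variety. For (1), taking $t=+$ in (\ref{eqn:linear}) simultaneously records that $+^A\colon A\times A\to A$ is a $\BV$-homomorphism and that it obeys the medial law $(a+b)+(c+d)=(a+c)+(b+d)$. A unital medial magma is automatically a commutative monoid, commutativity coming from $a+b=(0+a)+(b+0)=(0+b)+(a+0)=b+a$ and associativity from $(a+b)+c=(a+b)+(0+c)=(a+0)+(b+c)=a+(b+c)$; since the constant map $0\colon 1\to A$ is a homomorphism as well, this exhibits $(A,+^A,0)$ as a commutative internal monoid. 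For uniqueness I would run the Eckmann-Hilton argument proper: the multiplication $\cdot\colon A\times A\to A$ of any internal monoid $(A,\cdot,e)$ is a homomorphism and hence preserves $+$, giving the interchange $(a+c)\cdot(b+d)=(a\cdot b)+(c\cdot d)$; substituting the units into this identity forces first $e=0$ and then $\cdot=+^A$. As a $\BV$-homomorphism automatically preserves $+$ and $0$, the morphisms of commutative internal monoids are precisely the $\BV$-homomorphisms, which gives the claimed isomorphism of categories, matching \cite[1.10.5]{BB}.

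For (2) I fix $x\in A$ and form the composite homomorphism $F1\xra{-\ot x}F1\ot A\xra{\ \simeq\ }A$, whose first arrow is the homomorphism $x_r$ of Fact \ref{fact:pi} and whose second is the canonical unitor. By construction this composite sends $m\in F1$ to $m\cdot x$ and sends the generator $1$ to $x$; being a homomorphism it preserves $+$, so it carries $1+\cdots+1$ to $x+\cdots+x$. Evaluating at $n=1+\cdots+1$ then yields $n\cdot x=x+\cdots+x$.

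For (3) the decisive point is to recognise $F1$ as the tensor unit of $\BV$: a bimorphism $F1\times A\to C$ is determined by its value on the generator and hence corresponds to a single homomorphism $A\to C$, so $F1\ot A\simeq A$ naturally and $F1$ carries the canonical commutative monoid structure of the unit object. A module over the unit object must take this canonical isomorphism as its action, so the forgetful functor $\mathsf{Mod}_{F1}\to\BV$ is an isomorphism of categories, as in \cite{Davey}. The genuine crux is the uniqueness in (1), which rests entirely on the fact that any internal monoid multiplication is a $\BV$-homomorphism and therefore interchanges with $+$; once that is in hand the Eckmann-Hilton computation is forced, and the remaining work is the bookkeeping of identifying $F1$ with the tensor unit and matching the unit-object action with the semiring action $s\cdot a=s^A(a)$.
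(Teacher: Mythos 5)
The paper states this lemma without proof, delegating item (1) to the Eckmann--Hilton argument and \cite[1.10.5]{BB} and item (3) to \cite{Davey}; your argument is correct and fills in exactly those standard steps --- mediality plus the unit laws for commutativity and associativity, the interchange law forced by any internal multiplication being a homomorphism (whence $e=0$ and $\cdot=+^A$), the free generator and the unitor $F1\ot A\simeq A$ for (2), and the observation that a module over the tensor unit must act by the unitor for (3). One microscopic imprecision: homomorphy of $+^A$ is Equation (\ref{eqn:linear}) ranging over \emph{all} terms $t$, not the single instance $t=+$ (which yields only the medial law), but since you invoke (\ref{eqn:linear}) in full at the outset, nothing is lost.
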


An element $x$ of an algebra $A$ in a  J\'onsson-Tarski   variety is called \em invertible\em, if it is invertible in the monoid $(A,A\times A\xra{+^A}A, 0)$, that is, if there is an element $y\in A$ with $x+y = 0\ (=y+x)$; such an element is  uniquely determined and will be denoted by $-x$.   $Inv(A)$ denotes the set of  invertible elements of $(A,+^A, 0)$.

\begin{lem}\label{lem:inv}  
Let $\BV$ be an entropic   J\'onsson-Tarski   variety.
Then, for every $\BV$-algebra $A$, the following holds. 
\begin{enumerate}
\item $Inv(A)$ is a $\BV$-subalgebra  of $A$ with an embedding $\upsilon_A$ and, hence, an internal submonoid  of $(A,+^A, 0)$.
\item Every $\BV$-homomorphism $A\xra{f}B$ is, by restriction and corestriction, a $\BV$-homomorphism $Inv(A)\xra{}Inv(B)$. 

In particular, for every $b\in B$, the homomorphism $b_r\colon A\xra{}A\ot B$ restricts to a homomorphism $Inv(A)\xra{}Inv(A\ot B)$ and, hence, the homomorphism  $\upsilon_A\ot \upsilon_B$ factors 
as $Inv(A)\ot Inv(B)\xra{\upsilon_{A,B}}Inv(A\ot B)\hookrightarrow A\ot B$; moreover one has  $m(a\ot b) \in Inv(A)$, 
 for every monoid $(A,m,e)$ in $\BV$ and for all $a,b\in Inv(A)$. 
\item  For every $\BV$-homomorphism $A\xra{f}B$ one has $f(-x) = -f(x)$, for each $x\in Inv(A)$.
\item The map $Inv(A)\xra{i}Inv(A)$ with $x\mapsto -x$ is a $\BV$-homomorphism. Consequently, 
$Inv(A)$ is a (commutative) internal group and, in fact the largest such contained in $A$. 
\end{enumerate}
\end{lem}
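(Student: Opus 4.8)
The plan is to derive all four parts from a single computation based on the linearity axiom (\ref{eqn:linear}). For any $n$-ary term $t$ and any $x_1,\dots,x_n\in Inv(A)$ with inverses $-x_1,\dots,-x_n$, axiom (\ref{eqn:linear}) together with the fact that $\{0\}$ is a one-element subalgebra of $A$ gives
\[
t^A(x_1,\dots,x_n) + t^A(-x_1,\dots,-x_n)
= t^A\bigl(x_1+(-x_1),\dots,x_n+(-x_n)\bigr)
= t^A(0,\dots,0) = 0.
\]
This one identity does most of the work. It shows immediately that $t^A(x_1,\dots,x_n)$ is invertible, with inverse $t^A(-x_1,\dots,-x_n)$, so $Inv(A)$ is closed under every operation and is thus a $\BV$-subalgebra of $A$; the inclusion is then a homomorphism $\upsilon_A$, and since $Inv(A)$ contains $0$ and is closed under $+$ it is an internal submonoid of $(A,+^A,0)$. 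That settles (1).

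Next I would treat (3) and the opening assertion of (2) simultaneously. Given a homomorphism $f\colon A\to B$, the fact that $f$ preserves $+$ and $0$ yields $f(x)+f(-x)=f(x+(-x))=f(0)=0$ for $x\in Inv(A)$; hence $f(x)\in Inv(B)$ and, by uniqueness of inverses, $f(-x)=-f(x)$. So $f$ restricts and corestricts to a homomorphism $Inv(A)\to Inv(B)$, which is both (3) and the first claim of (2). Applying this to the homomorphism $b_r\colon A\to A\ot B$ of Fact \ref{fact:pi} shows that $b_r$ restricts to $Inv(A)\to Inv(A\ot B)$, so that $x\ot b=b_r(x)\in Inv(A\ot B)$ whenever $x\in Inv(A)$; in particular $x\ot y\in Inv(A\ot B)$ for all $x\in Inv(A)$ and $y\in Inv(B)$.

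For the factorization in (2) I would invoke the standard property that the image of the universal bimorphism generates $Inv(A)\ot Inv(B)$ as a subalgebra. Since $\upsilon_A\ot\upsilon_B$ sends each generator $x\ot y$ to $x\ot y\in Inv(A\ot B)$ by the previous paragraph, and $Inv(A\ot B)$ is a subalgebra, the whole homomorphism $\upsilon_A\ot\upsilon_B$ factors through the embedding $Inv(A\ot B)\hookrightarrow A\ot B$, producing $\upsilon_{A,B}$. The final assertion of (2) is then immediate: for a monoid $(A,m,e)$ and $a,b\in Inv(A)$ one has $a\ot b\in Inv(A\ot A)$, and $m$ restricts to a homomorphism $Inv(A\ot A)\to Inv(A)$, so $m(a\ot b)\in Inv(A)$.

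Finally, (4) falls out of the displayed identity once more: it reads $t^A(-x_1,\dots,-x_n)=-t^A(x_1,\dots,x_n)$, i.e. $i\bigl(t^A(x_1,\dots,x_n)\bigr)=t^A(i(x_1),\dots,i(x_n))$, so $i$ preserves every operation and is a $\BV$-homomorphism. As $i$ supplies inverses for $(Inv(A),+,0)$ as a homomorphism, $Inv(A)$ is a commutative internal group; it is the largest one contained in $A$ since any internal group inside $A$ consists of elements invertible with respect to $+^A$ (the only monoid structure on $A$, by Lemma \ref{lem:em}) and hence lies in $Inv(A)$. I expect the only genuine obstacle to be the factorization step in (2), where one must use that the universal bimorphism image generates the tensor product so that checking membership in $Inv(A\ot B)$ on pure tensors $x\ot y$ suffices; everything else reduces to the single linearity computation together with uniqueness of additive inverses.
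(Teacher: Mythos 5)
Your proof is correct and follows essentially the same route as the paper: the single computation $t^A(x_1,\dots,x_n)+t^A(-x_1,\dots,-x_n)=t^A(0,\dots,0)=0$ via the linearity axiom is exactly the paper's central argument, from which items (1) and (4) follow just as there. The remaining steps you spell out (restriction of homomorphisms, the factorization of $\upsilon_A\ot\upsilon_B$ via generation by pure tensors, and maximality via uniqueness of the internal monoid structure) are precisely the details the paper dismisses as ``obvious'' or ``trivial'', and your fillings-in are sound.
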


\begin{proof}
For every $k$-ary term $t$ and  invertible elements $m_1,\ldots,m_k\in A$   the element \linebreak $t^{A}(m_1,\ldots,m_k)\in A$ is invertible, since {\small $$t^{A}(m_1,\ldots,m_k)+ t^{A}(-m_1,\ldots,-m_k) =  t^{A}(m_1+ (-m_1),\ldots,m_k+ (-m_k)) = t^A(0,\ldots, 0)=0.$$}
This calculation shows, moreover, that the map $Inv(A)\xra{i}Inv(A)$ with $x\mapsto -x$ is a $\BV$-homomorphism.
Thus, $Inv(A)$ is an internal subgroup in $\BV$ and, when considered as an internal monoid,  it is an internal submonoid of $A$. 

Obviously $f(-x) = -f(x)$ for every homomorphism $A\xra{f}B$ and every $x\in Inv(A)$, which proves items 2 and 4.

The rest is trivial. 
\end{proof}

We denote by  $\BV_\Ab$  the full subcategory  of  $\BV$ spanned by all $\BV$-algebras $A$ with $Inv(A)=A$. 
 If $\BV = \Alg(\Omega,\mathcal{E})$, then 
$\BV_\Ab$ is the variety $\Alg(\Omega',\mathcal{E'})$ with $\Omega'$  obtained from $\Omega$ by adding a unary operation $-$,  and $\mathcal{E'}$  obtained from $\mathcal{E}$ by adding the equations 
\begin{enumerate}
\item $x +(-x) = 0$,
\item $\omega(-x_1,\ldots, -x_n) = -\omega(x_1,\ldots, x_n)$ for all $n$-ary operations  $\omega\in\Omega$, for all $n\in\N$.
\end{enumerate}
Obviously,  $\BV_\Ab$  
is an entropic J\'onsson-Tarski variety and 
coincides with 
the category of all internal groups in $\BV$.  
Consequently, $\BV_\Ab$ is an additive category (see \cite[1.10.13]{BB}) and, in fact,  
 the largest additive subvariety of $\BV$. Being exact as a variety, $\BV_\Ab$ even is an abelian category (see \cite[2.6.11]{Bor}).
In accordance with \cite{vdL} we call  $\BV_\Ab$ the \em abelian core \em of $\BV$.

\begin{prop}\label{prop:AbV}
For every entropic  J\'onsson-Tarski variety  $\BV$ the following hold.
\begin{enumerate}
  \item  $\BV_\Ab$ is a full isomorphism-closed reflective subcategory of $\BV$.
\item The assignment $A\mapsto Inv(A)$ defines a functor  $\BV\xra{Inv}\BV_\Ab$ and this is right adjoint to the embedding $\BV_\Ab\hookrightarrow\BV$. Moreover,   $\BV_\Ab\hookrightarrow\BV\xra{Inv}\BV_\Ab = Id$.
\item $\BV_\Ab$ is closed under the entropic tensor product $-\ot_\BV -$ of $\BV$; consequently, the entropic monoidal structure  of $\BV_\Ab$ is given by $-\ot_\BV -$ and the reflection $RF1$ of $F1$ into $\BV_\Ab$ as the unit object. 
\item The embedding $\BV_\Ab\hookrightarrow\BV$ is a symmetric monoidal functor.
\end{enumerate}
\end{prop}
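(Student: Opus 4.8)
The plan is to handle the four items in order, with Lemma~\ref{lem:inv} doing the work for (1)--(3) and the coincidence of the two tensor products settling (4).

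First, for (1) and (2): $\BV_\Ab$ is presented as the variety $\Alg(\Omega',\mathcal{E}')$, hence is locally presentable and accessibly embedded in $\BV$; it is isomorphism-closed since ``$Inv(A)=A$'' is an isomorphism-invariant condition, and full because by Lemma~\ref{lem:inv}(3) every $\BV$-homomorphism between internal groups automatically commutes with $x\mapsto -x$. I would then observe that $\BV_\Ab$ is closed under limits in $\BV$: products of internal groups are internal groups, and if $f(x)=g(x)$ then $f(-x)=-f(x)=-g(x)=g(-x)$, so equalizers of maps of group objects are again group objects. An accessibly embedded, limit-closed subcategory of a locally presentable category is reflective, giving the reflector $R$ and proving (1). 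For (2), Lemma~\ref{lem:inv}(1),(4) show $Inv(A)$ is an internal group, so $Inv(A)\in\BV_\Ab$, and Lemma~\ref{lem:inv}(2) makes $Inv$ a functor with counit the embeddings $\upsilon_A\colon Inv(A)\hookrightarrow A$. The adjunction is just the universal property of $\upsilon_A$: for $G\in\BV_\Ab$ any homomorphism $G\to A$ has invertible values (as $f(g)+f(-g)=f(0)=0$), hence factors uniquely through the mono $\upsilon_A$, and fullness turns this into the bijection $\Hom_\BV(G,A)\cong\Hom_{\BV_\Ab}(G,Inv(A))$. Since $Inv(G)=G$ for $G\in\BV_\Ab$, the composite $\BV_\Ab\hookrightarrow\BV\xra{Inv}\BV_\Ab$ is $\Id$, exhibiting $\BV_\Ab$ as bireflective.

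The heart of (3) is closure under $\ot_\BV$. For $A,B\in\BV_\Ab$ I would show each simple tensor $a\ot b$ is invertible: by Fact~\ref{fact:pi}(1) it equals $b_r(a)$ for the homomorphism $b_r\colon A\to A\ot B$, and Lemma~\ref{lem:inv}(2) carries $Inv(A)=A$ into $Inv(A\ot B)$. Because the simple tensors generate $A\ot B$ as a $\BV$-algebra while $Inv(A\ot B)$ is a subalgebra (Lemma~\ref{lem:inv}(1)), it follows that $Inv(A\ot B)=A\ot B$, i.e.\ $A\ot B\in\BV_\Ab$. The ``consequently'' is then formal: by fullness, the bimorphisms of $\BV_\Ab$ are precisely the bimorphisms of $\BV$ with all entries and value in $\BV_\Ab$, so the $\BV$-universal bimorphism $A\times B\to A\ot B$---whose codomain now lies in $\BV_\Ab$---is already universal in $\BV_\Ab$; hence $\ot_{\BV_\Ab}$ is the restriction of $\ot_\BV$. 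As $R$ is a left adjoint it preserves free algebras, so the unit object of $\BV_\Ab$, being the free $\BV_\Ab$-algebra on one generator, is $R(F1)=RF1$.

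For (4), item (3) makes the tensor comparisons $E(A)\ot_\BV E(B)\to E(A\ot_{\BV_\Ab}B)$ identities, since both sides are $A\ot_\BV B$, and the unit comparison $F1\to E(RF1)=RF1$ is the reflection unit $\eta_{F1}$. The associativity and symmetry coherence squares commute at once because the constraints of $\BV_\Ab$ are literally the restrictions of those of $\BV$. The only diagram requiring attention is left-unit coherence, $\lambda^\BV_A=\lambda^{\BV_\Ab}_A\circ(\eta_{F1}\ot\id_A)$, which I would verify on the generating simple tensors $1\ot a$: $\eta_{F1}$ sends the generator $1$ of $F1$ to that of $RF1$, so both composites send $1\ot a$ to $a$, and agreement on generators of $F1\ot A$ suffices. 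The only genuinely non-formal step is the closure of $\BV_\Ab$ under $\ot_\BV$ in (3); once Lemma~\ref{lem:inv} is available, the remaining arguments are bookkeeping. A secondary subtlety worth flagging is that the inclusion is only \emph{lax} symmetric monoidal: the unit comparison $\eta_{F1}$ fails to be invertible unless $F1$ is already a group, which is exactly why the unit-coherence square must be checked separately rather than inferred from strong monoidality.
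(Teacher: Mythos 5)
Your proposal is correct, and its skeleton coincides with the paper's: Lemma~\ref{lem:inv} does all the work, closure under $\ot_\BV$ is proved exactly as in the paper (each simple tensor $a\ot b$ is invertible because $b_r$ is a homomorphism carrying $Inv(A)=A$ into $Inv(A\ot B)$, the simple tensors generate, and $Inv(A\ot B)$ is a subalgebra), and unit coherence is verified on the generators $1\ot g$ of $F1\ot_\BV G$ using that the reflection map sends free generator to free generator --- which is precisely the paper's Diagram~(\ref{diag:ref}). You diverge in two local steps, both legitimately. For reflectivity in item (1), the paper argues in one line that the embedding commutes with the forgetful functors, hence is an algebraic functor and so has a left adjoint; your route via limit-closure and accessible embedding in a locally presentable category reaches the same conclusion with heavier but standard machinery (your verification that equalizers and products of internal groups are internal groups is fine). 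For identifying $\ot_{\BV_\Ab}$ with the restriction of $\ot_\BV$ in item (3), the paper runs the hom-set chain $\BV_\Ab(G\ot H,K)\simeq\BV_\Ab(G,[H,K])=\BV(G,[H,K])\simeq\BV(G\ot_\BV H,K)$, which tacitly requires the additional input that the internal hom-functors of $\BV$ and $\BV_\Ab$ coincide (i.e.\ $[H,K]\in\BV_\Ab$); your argument --- the $\BV$-universal bimorphism $A\times B\to A\ot_\BV B$ now has codomain in $\BV_\Ab$, and by fullness (Lemma~\ref{lem:inv}(3)) $\BV$-bimorphisms between $\BV_\Ab$-algebras are automatically $\BV_\Ab$-bimorphisms, so the same bimorphism is universal in $\BV_\Ab$ --- avoids that input and is arguably more self-contained. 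Your closing observation that the embedding is only \emph{lax} symmetric monoidal (identity tensor comparisons, but unit comparison $r\colon F1\to RF1$ non-invertible unless $F1$ is already a group, e.g.\ $\N\to\Z$ for $\BV={_c\mathbf{Monoids}}$) is a correct sharpening of item (4), consistent with the paper's lax reading of ``monoidal functor'', and it rightly isolates the unit-coherence square as the only diagram needing an actual computation.
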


\begin{proof}
 $\BV_\Ab$ is a full isomorphism-closed  subcategory of $\BV$ by the preceding lemma. Since its embedding into $\BV$ commutes with the forgetful functors, it is an algebraic functor and, thus, has a left adjoint.
 Obviously every morphism of internal groups $f\colon G\lra H$ factors over the embedding $Inv(A)\hookrightarrow A$, which shows  that $Inv$ is a coreflection. This proves items (1) and~(2).

 Denoting the entropic tensor product of $\BV_\Ab$  by $-\ot -$ and that of $\BV$ by $-\ot_\BV -$, we first deduce from Lemma \ref{lem:inv} that, for
 internal groups $G$ and $H$, all elements $g\ot_\BV h\in G\ot_\BV H$ are invertible  since the map $g\ot_\BV -$ is a homomorphism. Thus, $G\ot_\BV H$ is an internal group.
 We then have, for every triple $G,H,K$ of internal groups in $\BV$, 
$ \BV_\Ab(G\ot H, K)\simeq \BV_\Ab(G,[H,K]) =  \BV(G,[H,K])\simeq \BV(G\ot_\BV H, K)\simeq \BV_\Ab(G\ot_\BV H, K)$, since the internal hom-functors of $\BV$ and $\BV_\Ab$ coincide. 

To complete the proof of items (3) and (4) it remains to show that the following diagram commutes for every internal group $G$, where $F1\xra{r}RF1$ is the reflection map. 
\begin{equation}\label{diag:ref}
\begin{aligned}
\xymatrix@=2.5em{
 F1\ot_\BV G    \ar[r]^{\!\!\!\!\! r\ot_\BV id  }\ar[d]_{can_\BV  }&{RF1\ot_\BV G } \ar@{=}[d]^{ }  \\
  G            &   RF1\ot G \ar[l]^{\!\!\!\!\! \!\!\! can} 
}
\end{aligned}
\end{equation}
But this is clear, since $ F1\ot_\BV G$ is generated by the elements $1\ot g$, $g\in G$,  and $r$ maps the free generator of $F1$ to the free generator of the free $\BV_\Ab$-algebra $RF1$.
\end{proof}

\begin{exas}\hfill
\begin{enumerate}
\item $(_c\mathbf{Monoids})_\Ab= \mathbf{Ab}$.
\item $_c\Mon\BV_\Ab$ is isomorphic to $(_c\Mon\BV)_\Ab$. \\
In fact, $(_c\Mon\BV)_\Ab$ is 
the full subcategory of $_c\Mon\BV$, consisting of all commutative monoids $(M,m,e)$ with $Inv(M) = M$ and, since the embedding $\BV_\Ab\xrightarrow{E}\BV$ is monoidal, it embeds $\Mon\BV_\Ab$ into this category. Conversely, if $(M,m,e)\in{_c\Mon\BV}$ satisfies $Inv(M) = M$ and $F1\xra{r}RF1$ is the reflection of $F1$, denote by  
$RF1\xra{e'}M$  the unique homomorphism with $e'\circ r = e$;  then $(M,m, e')\in \Mon\BV_\Ab$ (use Lemma \ref{lem:inv} and Diagram (\ref{diag:ref})) and $(M,m,e) = E(M,m, e')$. 
\item $(\Mod_R)_\Ab = \Mod_R$, for every  commutative ring $R$ and, more generally,
\item $(\mathbf{S}\Mod_S)_\Ab= \Mod_{RS}$, for every  commutative semiring $S$, where $RS$ is the reflection of $S$ into the category of commutative rings\footnote{The embedding of the category of commutative rings into the category of commutative semirings is an algebraic functor and therefore has a left adjoint $R$.}. \\
This is easily seen when recalling the fact that, in every entropic variety $\BV$, the left $\ma$-modules $(M,A\ot M\xra{ l}M)$ of a $\BV$-monoid $\ma$ are in one-to-one correspondence with  $\BV$-monoid morphisms $\ma\xra{\phi}[M,M]$, where $\phi$ corresponds to $l$ by the adjunction $-\ot M \dashv[M,-]$ (see e.g. \cite{Por}). Thus, an $S$-semimodule $M$ with $M = Inv(M)$ is a semiring homomorphism $S\xra{\phi}[M,M]$, where $[M,M]$ is the endomorphism monoid of $M$ in $_c\mathbf{Monoids}$. 
 This is a monoid in $\mathbf{Ab}$ by item (1) 
and, thus, $\phi$ corresponds to a unique ring homomorphism $RS\xra{\phi}[M,M]$, that is, to an $RS$-module.
\item $(\mathbf{SLat_0})_\Ab =\{ 0\}$.
\end{enumerate}
\end{exas}

\section{The universal envelope functor}\label{sec:env}
\subsection{Tensor bimonoids and Lie algebras}\label{ssec:tensoralg}

Let $\BV$ be an entropic variety.
By the standard construction of free monoids in monoidal closed categories (see [18])  the free monoid $TA$ in $\Mon\BV$ over a $\BV$-algebra
$A$ has $TA = \coprod_{n\in\N}A^{\ot n}$ as its underlying $\BV$-algebra\footnote{$A^{\ot n}$ is to be understood in the obvious way: $A^{\ot 0} = F1$ and $A^{\ot n+1} = A \ot A^{\ot n}$} 
with unit $F1\xra{ \iota_0}TA$ and multiplication given by ``concatination".
The coproduct injection $\iota_1 \colon A \lra TA$ is its universal morphism.

 If  $\BV$ is an entropic  J\'onsson-Tarski variety, this $\BV$-monoid becomes a $\BV$-bimonoid $(TA,\mu,\epsilon)$, called the \em $\BV$-tensor bimonoid\em,  as follows, where $\pi_{TA}$ is  the homomorphism defined  in Fact \ref{fact:pi}. 
\begin{enumerate}
\item $\mu\colon TA \lra  TA \ot TA$ is the homomorphic extension of the $\BV$-homomorphism
$A \xra{\iota_1} TA \xra{\pi_{TA}} TA \ot TA$  to a morphism of $\BV$-monoids. 

\item $\epsilon\colon TA \lra F1$ is the homomorphic extension the $\BV$-homomorphism $A \xra{0} F1$ to a morphism of $\BV$-monoids. 
\end{enumerate}
That $(TA,\mu,\epsilon)$ this way becomes a $\BV$-comonoid can be shown literally the same way is in the case of modules. Since $\mu$ and $\epsilon$ are $\BV$-monoid morphisms by definition, $TA$ is a $\BV$-bimonoid.

For every  $A\in \BV_\Ab$
the tensor bimonoid $(TA,\mu,\epsilon)$ even becomes a Hopf monoid in $\BV_\Ab$ and, thus, in $\BV$.  The required antipode acts as $a_1\ot\cdots a_n\mapsto (-1)^na_n\ot \cdots\ot  a_1$\footnote{Note that the notation $(-1)^nx$ is symbolic and short for $(-(-\cdots (-x)\cdots)$.}.   The proof again is literally the same as in the case of modules.

Every Hopf algebra $\mh$, has an underlying Lie algebra, obtained as the underlying Lie algebra of  the underlying algebra of $\mh$. This construction is not possible over an arbitrary entropic variety $\BV$, since neither can the Jacobi identity
\begin{equation}\label{eqn:jac}
[x,[y,z]]+[y,[z,x]]+[z,[x,y]]=0
\end{equation}
be expressed, nor can the underlying Lie algebra of a monoid  be defined.

\begin{defi}\label{def:Lie}\rm
Let  $\BV$ be an entropic  J\'onsson-Tarski variety. A \em $\BV$-Lie algebra \em is a pair $(A, [-,-])$ consisting of a $\BV$-algebra $A$ and a $\BV$-bimorphism $[-,-]\colon A\times A\lra A$ satisfying the identity (\ref{eqn:jac}) and, in addition, the identity $[x,x] = 0$. \\
A \em $\BV$-Lie morphism \em $(A, [-,-])\xra{f}(B, [-,-])$ is a $\BV$-homomorphism $A\xra{f}B$ making the following diagram commute.
\begin{equation}\label{diag:Lhom}
\begin{aligned}
\xymatrix@=2em{
    A\times A \ar[r]^{f\times f  }\ar[d]_{[-,-]  }&{B\times B } \ar[d]^{[-,-]  }  \\
  A   \ar[r]_{ f}          &    B 
}
\end{aligned}
\end{equation}
This defines the category $\Lie\BV$  of  $\BV$-Lie algebras.   There is a forgetful functor $\Lie\BV\xra{V}\BV$.\end{defi}
Denoting by $[-]\colon A\otimes A\lra A$ the $\BV$-homomorphism corresponding to the bimorphism $[-,-]$  
 one may say, equivalently,  that  $(A, [-])$ is a $\BV$-Lie algebra, if $[-]$  satisfies the  axioms
\begin{equation*}\label{eqn:jac2}
[x,[y\ot z]]+[y,[z\ot x]]+[z,[x\ot y]]=0  \text{\ and \ } [x\ot x] = 0.
\end{equation*}
In this language the Lie homomorphism axiom obviously is commutativity of Diagram (\ref{diag:Lhom}) with $\times$ replaced by $\ot$ and the bimorphism $[-,-]$ replaced by $[-]$.

Obviously, $\Lie\BV$  is a  J\'onsson-Tarski variety and  $(\Lie\BV)_\Ab= \Lie\BV_\Ab$.

\subsection{The enveloping (bi)monoid of a Lie algebra}

Since there is no underlying functor from $\BV$-monoids to $\BV$-Lie algebras for  an arbitrary entropic 
J\'onsson-Tarski variety, unless the theory of $\BV$ admits an inverse of the J\'onsson-Tarski operation $+$, that is, if $\BV\simeq\Mod_R$,    the standard categorical argument for the existence of the universal enveloping  algebra $UL$ of a Lie-algebra $L$  does not apply\footnote{An underlying functor would be algebraic  and, thus, have a left adjoint.}.
 We show next that
one  can in spite of that generalize  the standard construction, where the resulting $\BV$-monoid even carries the structure of a $\BV$-bimonoid, as in the case of $\BV = \Mod_R$.

\begin{thm}\label{prop:U}
There exist functors $Lie\colon {\Mon\BV} \lra \Lie\BV$ and $U\colon \Lie\BV\xra{}\Mon\BV$, where we call $Lie\ma$ the \em underlying Lie algebra of the monoid $\ma$\em, and $U\ml$ the \em enveloping monoid of the Lie algebra $\ml$\em, such that 
\begin{enumerate}
\item the  diagram commutes, that is, $Lie$ takes its values in $\Lie\BV_\Ab$
\begin{equation*}
\begin{aligned}
\xymatrix@=2em{
   {\Mon\BV} \ar[d]^{  }\ar[r]^{Lie }& { \Lie\BV} \ar[d]^{ }  \\
      \BV\ar[r]^{Inv}    &  \BV
}
\end{aligned}
\end{equation*}
\item there is a natural quotient $q\colon T\circ V\Rightarrow U$
\begin{equation*}
\begin{aligned}
\xymatrix@=2em{
  \Lie\BV \ar[rr]^U \ar[dr]_{V}&{}&{\Mon\BV }   \\
& \BV\ar[ur]_{T  }\ar@{=>}[u]^q 
           &    }
\end{aligned}
\end{equation*}
\item The restriction ${U_\Ab}\, \colon\Lie\BV_\Ab\hookrightarrow \Lie\BV\xra{U}\Mon\BV$ of $U$ is left adjoint to the corestriction 
$Lie_\Ab\colon  \Mon\BV\xra{} \Lie\BV_\Ab$ of $Lie$. Consequently, $Lie$ is right adjoint to $U_\Ab\circ R$, where $R$ is the reflection of $\Lie\BV$ into $\Ab(\Lie\BV) = \Lie\BV_\Ab$.
\item $U$ factors as $\Lie\BV\xra{U_{Bi}}\Bimon\BV\xra{|-|}\Mon\BV$, that is, the enveloping monoid of a Lie algebra $\ml$ carries a bimonoid structure and this construction is functorial.
\item $U_\Ab$ factors as  $\Lie\BV_\Ab\xra{U_H}\Hopf\BV\xra{}\Mon\BV$,  the enveloping monoid of a Lie algebra $\ml$ carries a Hopf monoid structure in a functorial way, provided that $Inv L = L$.
\end{enumerate}
\end{thm}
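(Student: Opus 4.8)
The plan is to build both functors explicitly, since the absence of an additive inverse is exactly what prevents the usual categorical arguments and what forces $Lie$ to land in the abelian core. For $Lie$, given a $\BV$-monoid $\ma=(A,m,e)$ I would set $Lie\,\ma:=(Inv(A),[-,-])$ with $[a,b]:=m(a\ot b)-m(b\ot a)$. This is legitimate: by Lemma \ref{lem:inv}(2) one has $m(a\ot b)\in Inv(A)$ for $a,b\in Inv(A)$, and $Inv(A)\in\BV_\Ab$ carries additive inverses, so $[-,-]$, being a difference of the corestriction of $m$ and of $m$ precomposed with the symmetry, is a bimorphism into $Inv(A)$. The alternating law $[a,a]=0$ and the Jacobi identity are the usual universal identities that hold in any associative object of an additive category, hence are automatic. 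Since $Inv(A)\in\BV_\Ab$ and $(\Lie\BV)_\Ab=\Lie\BV_\Ab$, we get $Lie\,\ma\in\Lie\BV_\Ab$; this is precisely the commuting square of item (1), and functoriality follows from Lemma \ref{lem:inv}(2,3).

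For $U$ there is no underlying functor to adjointify, so I would construct $U\ml$ by hand as a coequalizer in the variety $\Mon\BV$. Writing $L=V\ml$, consider the two $\BV$-homomorphisms $\theta_1,\theta_2\colon L\ot L\to TL$ given by $\theta_1(x\ot y)=x\ot y$ (the degree-two inclusion) and $\theta_2(x\ot y)=y\ot x+\iota_1([x,y])$, and let $U\ml$ be the coequalizer of their free monoid extensions $T(L\ot L)\rightrightarrows TL$. The decisive point is that the defining relation is written \emph{additively} as $xy=yx+[x,y]$, which is meaningful in a semimodule. This coequalizer exists because $\Mon\BV$ is again a finitary variety; the comparison $q\colon TV\ml=TL\to U\ml$ is a regular epi natural in $\ml$ (item (2)), and $U$ is functorial because a Lie morphism preserves brackets and hence coequalizes the defining pair.

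For item (3) I would compute $\Mon\BV(U\ml,\ma)$ using the coequalizer together with the free-monoid adjunction: a monoid morphism $U\ml\to\ma$ is the same thing as a $\BV$-homomorphism $h\colon L\to A$ satisfying $h(x)h(y)=h(y)h(x)+h([x,y])$. When $\ml\in\Lie\BV_\Ab$, so $L\in\BV_\Ab$, Lemma \ref{lem:inv}(2) factors $h$ through $Inv(A)$ as some $\tilde h$, and, now that subtraction is available in $Inv(A)$, the identity rearranges into $\tilde h([x,y])=[\tilde h x,\tilde h y]$, i.e. into the assertion that $\tilde h\colon\ml\to Lie_\Ab\ma$ is a Lie morphism. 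This bijection is natural, giving $U_\Ab\dashv Lie_\Ab$; composing with the reflection $R\colon\Lie\BV\to\Lie\BV_\Ab$ (which exists by Proposition \ref{prop:AbV}(1) applied to the J\'onsson-Tarski variety $\Lie\BV$) and using that the embedding composed with $Lie_\Ab$ is $Lie$ yields $U_\Ab\circ R\dashv Lie$.

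Finally, for items (4) and (5) I would descend the tensor-bimonoid structure of $TL$ along $q$. Using the universal property of the coequalizer, I define $\bar\epsilon$ from $\epsilon$ (which annihilates all positive degrees, hence trivially coequalizes) and $\bar\mu$ from the monoid extension of $x\mapsto q(x)\ot 1+1\ot q(x)$; the verification that this extension coequalizes $\theta_1,\theta_2$ reduces, after expanding $\mu(x)\mu(y)=xy\ot 1+x\ot y+y\ot x+1\ot xy$ in $TL\ot TL$, to the single relation $q(xy)=q(yx)+q([x,y])$ in the outer tensor slots, the cross terms cancelling automatically. Thus $q$ is a bimonoid morphism and item (4) ($U_{Bi}$) holds for all of $\Lie\BV$. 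For item (5), assume $Inv\,L=L$; then $TL\in\BV_\Ab$ (it is a Hopf monoid there) and so the quotient $U\ml\in\BV_\Ab$, making subtraction available in $U\ml$. The antipode $S$ of $TL$ descends by the same universal-property recipe: the relevant anti-morphism extension sends $x\mapsto -q(x)$ and coequalizes precisely via $q(xy)=q(yx)+q([x,y])$ rearranged by subtraction, so one obtains $\bar S$ and hence $U_H$. The antipode axioms for $\bar S$ then follow from those on $TL$ because $q$ is an epic bimonoid morphism. I expect the main obstacle to be exactly this descent in (5): it is the one step that genuinely needs $U\ml$ to lie in the abelian core, both to have an antipode on $TL$ at all and to legitimise the additive rewriting of the defining relation, and it is where one must confirm that the monoid quotient $q$ is simultaneously a Hopf-monoid quotient.
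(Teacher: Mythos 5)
Your constructions for items (1)--(4) are essentially the paper's own proof. The paper defines $Lie\,\ma=(Inv(A),[-])$ with $[a\ot b]=m(a\ot b)-m(b\ot a)$ exactly as you do, and it realizes $U\ml$ as the quotient of $TL$ forcing the additively written relation $m\circ(\iota_1\ot\iota_1)=+\circ\rho$, i.e.\ $xy=yx+[x,y]$; its (regular epi, mono-source) factorization of the family $\mathcal{S}_L$ of all monoid morphisms killing this relation yields the same quotient as your coequalizer of $T(L\ot L)\rightrightarrows TL$ in $\Mon\BV$, since a monoid morphism out of $TL$ coequalizes your pair iff it lies in $\mathcal{S}_L$. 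Your hom-set computation for $U_\Ab\dashv Lie_\Ab$ is the paper's unit-based argument in adjoint-bijection form (including the factorization of $h$ through $Inv(A)$ via Lemma~\ref{lem:inv} and the additive rearrangement, which is legitimate there because products of invertibles are invertible), and your descent of $\mu$ and $\epsilon$ along $q$, with the cross terms matching and the defining relation surviving in the outer tensor slots, is the paper's computation almost verbatim.

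Item (5), however, contains a genuine error, and it sits exactly where you locate the crux. The claims that $Inv\,L=L$ forces $TL\in\BV_\Ab$ and hence $U\ml\in\BV_\Ab$ are false in every non-additive $\BV$: the degree-zero summand of $TL=\coprod_{n}L^{\ot n}$ is $F1$, which is not an internal group unless $\BV$ is already additive (for $\BV={_c\mathbf{Monoids}}$ one has $F1=\N$); and $F1$ is a retract of $UL$ via $F1\xra{e}UL\xra{\upsilon}F1=\id$, so $UL\in\BV_\Ab$ would force $F1\in\BV_\Ab$, because homomorphic images of invertible elements are invertible. Consistently, the theorem asserts only $U_H\colon\Lie\BV_\Ab\to\Hopf\BV$, not into $\Hopf\BV_\Ab$; Hopf monoids like $\N[G]$ in commutative monoids show that underlying algebras of Hopf monoids need not be internal groups. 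The repair --- and the paper's actual route --- uses only elementwise invertibility: for $L\in\BV_\Ab$ every element of $L^{\ot n}$, $n\geq 1$, is invertible because $\BV_\Ab$ is closed under $\ot$ (Proposition~\ref{prop:AbV}), while in degree $0$ the antipode $a_1\ot\cdots\ot a_n\mapsto(-1)^na_n\ot\cdots\ot a_1$ of Section~\ref{ssec:tensoralg} acts as the identity, so $S$ exists on $TL$ without $TL$ being an internal group; and its descent along $q$ needs only invertibility of the single element $q\iota_1[x\ot y]$ (Lemma~\ref{lem:inv}), so that $q(xy)-q[x\ot y]=q(yx)$ makes sense, with no global subtraction on $U\ml$ anywhere. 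With that local substitute, your closing step (the antipode axioms descend along the epic bimonoid morphism $q$) goes through and coincides with the paper's ``extension of $x\mapsto -x$''.
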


\begin{proof}
Consider, for a $\BV$-monoid  $\ma = (A,m,e)$, the map $[-]\colon Inv(A)\ot Inv(A) \lra Inv(A)$ given by $[a\ot b]: = m(a\ot b) - m(b\ot a)$. This is a $\BV$-homomorphism by item (2) of Lemma \ref{lem:inv}. $Lie(A,m):= (Inv(A),[-])$ then  is a Lie algebra in $\BV$ (in fact in $\BV_\Ab$) and, for every $\Mon\BV$-morphism $f\colon (A,m,e)\lra (B,n,u)$ its restriction $Lie(f)$ to a homomorphism  $Inv(A)\lra Inv(B)$ satisfies $f([a\ot b] = [(f\ot f)(a\ot b)]$, for all $a,b\in A$. This defines the functor $Lie$.

Given a $\BV$-Lie algebra $(L,[-])$, form the free $\BV$-monoid $(TL,m,e)$ over $L$ as in Section \ref{ssec:tensoralg}. 
Let $\rho\colon L\ot L\lra TL\times TL$ be the $\BV$-homomorphism with $\pi_1\circ \rho = m\circ\sigma\circ (\iota_1\ot \iota_1)$ and $\pi_2\circ \rho =\iota_1\circ [-]$. 
Then the family $\mathcal{S}_L$ of all $\BV$-monoid morphisms $TL\xra{f_i}\ma_i$  with 
$f_i\circ  m \circ (\iota_1\ot \iota_1)  =  f_i \circ (+^L\circ \rho)$  
has a  (regular epi, monosource)-factorization 
$ TL\xra{f_i} A_i =  TL\xra{q_L} U\ml \xra{m_i} A_i$ 
in $\Mon\BV$, since $\Mon\BV$ is a variety. If $l\colon \ml\lra \ml'$ is a Lie-morphism, the $\BV$-homomorphism $Tl\colon TL\lra TL'$ is a monoid homomorphism and, for each $f'_i \in \mathcal{S}_{L'}$, $f'_i\circ Tl\in \mathcal{S}_L$. Thus, there exists a unique monoid morphism $Ul\colon UL\lra UL'$ with $Ul\circ q_L = q_{L'}\circ TL$. This defines a functor $U$ as well as a natural transformation $q\colon T\circ |-|\Rightarrow U$ being pointwise a quotient.
We call the monoid $U\ml$ the \em enveloping monoid of $\ml$\em.

Denote, for $\ml\in\Lie(\BV_\Ab)$,  by $\eta\colon L\xra{} Lie UL$ the $\BV$-homomorphism  $q_L\circ\iota_1$ ($L\in \BV_\Ab$ implies $q_L\circ\iota_1(c)\in Inv(UL)$, for each $x\in L$). 
Since $q_L(m(\iota_1 x\ot \iota_1y))=q_L(m(\iota_1 y\ot \iota_1 x)+\iota_1[x\ot y])=q_Lm(\iota_1 y\ot \iota_1 x) +q_L\iota_1[x\ot y]$ by definition of $q_L$,  $[q_L\iota_1 x\ot q_L\iota_1 y] = m(q_L\iota_1 x \ot q_L\iota_1 y) - m(q_L\iota_1 x \ot q_L\iota_1 x) $ by definition of $Lie$, and since $q_L$ is a monoid morphism, one concludes that
$\eta$ is a $\Lie\BV$-morphism.

For any Lie-morphism $\ml\xra{f}Lie\ma$ with 
$\ma =(A,m,e)\in \Mon\BV$ let $TL\xra{f^\sharp}A$ be its extension to a monoid morphism. By the definition of $U$ this morphism factors as $f^\sharp = TL\xra{q_L}UL\xra{\tilde{f}} \ma$ with a monoid morphism $UL\xra{\tilde{f}}\ma$, which is the unique such morphism with $Lie\tilde{f}\circ \eta = f$. This proves the first statement of item 3. The second statement follows by composing this adjunction with the adjunction given by the embedding $(\Lie\BV)_\Ab\hookrightarrow \Lie\BV$ and its left adjoint (see Proposition \ref{prop:AbV}).

The monoid $U\ml$ can be supplied with a bimonoid structure as follows.
Let $u\colon L\lra UL\ot UL$ be 
map with $x\mapsto (q\circ\iota_1) x\ot 1 + 1\ot (q\circ\iota_1) x$. 
 In other words, with notation as in Section \ref{ssec:tensoralg}, $u$ is the $\BV$-homomorphism 
 $$L\xra{\iota_1}TL\xra{\mu}TL\ot TL\xra{q\ot q} UL\ot UL = L\xra{\pi}L\ot L\xra{\iota_1\ot\iota_1}TL\ot TL\xra{q\ot q}UL\ot UL,$$ 
which has  $\nu :=  TL\xra{\mu} TL\ot TL\xra{q\ot q}UL\ot UL$ as its unique extension to a $\BV$-monoid morphism. 
Now the  straightforward calculation
{\footnotesize
\begin{eqnarray*}
u([x\ot y] + m(y\ot x)) 
&=& (q\ot q)\circ (\iota_1\ot\iota_1)\big(([x\ot y] + m(y\ot x))\ot 1 + 1\ot ([x\ot y] + m(y\ot x))\big)\\
&=& \big(q\circ\iota_1([x\ot y] + m(y\ot x))\big)\ot 1 +1\ot \big(q\circ\iota_1([x\ot y] + m(y\ot x))\big)\\
&=&\big( q\circ\iota_1(m(x\ot y))\big)\ot 1 + 1\ot \big(q\circ\iota_1(m(x\ot y))\big)\\
&=&(q\ot q)\circ (\iota_1\ot\iota_1)\big(m(x\ot y)\big)\\
&=&u(m(x\ot y))
\end{eqnarray*}
}
with $x \ot y\in L\ot L$ proves  the  equation 
\begin{equation*}\label{eqn:fi}
u\circ m = \nu\circ \iota_1\circ m = \nu\circ \iota_1\circ (+\circ\rho) = u\circ (+\circ\rho)
\end{equation*}
which shows that $\nu$ belongs to $\mathcal{S}_L$.
 Consequently there exists a morphism of $\BV$-monoids $UL\xra{\delta}UL\ot UL$, 
such that the following diagram commutes
\begin{equation}\label{diag:q}
\begin{aligned}
\xymatrix@=2em{
   TL  \ar[r]^{ q }\ar[d]_{\mu  }&{UL } \ar[d]^\delta   \\
 TL\ot TL   \ar[r]_{ q\ot q}          &    UL\ot UL 
}
\end{aligned}
\end{equation}
Since the extension  $\epsilon\colon TL\lra F1$ of the $0$-homomorphism $L\lra F1$ belongs to the family $(f_i)_i$ as well, as is easily seen, 
$\epsilon$ factors in $\Mon\BV$ as $TL\xra{q}UL\xra{\upsilon}F1$.
It now follows trivially that $(TL,\mu,\epsilon)\xra{q}(UL,\delta,\upsilon)$ is a morphism of $\BV$-bimonoids, since $q$ is surjective.
This construction is functorial:
If $f\colon L\xra{}L'$ is a Lie-morphism and $Tf$ the corresponding monoid-morphism $TL\xra{}TL'$, then $q_{L'}\circ Tf\in \mathcal{S}_L$, such that  there is a unique monoid-morphism $Uf\colon UL\xra{}UL'$ with $Uf\circ q_L = q_{L'}\circ Tf$. $Uf$ is a comonoid morphism as well; it is compatible with the comonoid structures just defined, as is easily seen. This proves item 4.

Item 5 now follows literally as in the case of modules: the required antipode $S$ is the extension of the $\BV$-homomorphism $L\xra{}L$ given by $x\mapsto -x$ and this is preserved by the bimonoid morphisms $Uf$ just defined. 
\end{proof}

\section{Primitive element functors}\label{sec:prim}

Recall that in the classical case, where $\BV=\Mod_R$, there exists a so-called the \em primitive element functor \em 
$P\colon\Bialg_R\xra{}\mathbf{Lie}_R$, which is right adjoint to 
$U_{Bi}\colon\mathbf{Lie}_R\xra{}\Bialg_R$. This cannot be generalized to arbitrary entropic varieties, since here one  cannot even define so-called \em primitive elements\em.
This problem  is partly addressed in \cite{Rom} for  J\'onsson-Tarski varieties.
The problem whether the functor in this theorem gives rise to an adjunction is not considered.
We here deal with this problem as follows.

\subsection{Primitive elements}

\begin{defi}
\rm
Let $\BV$ be an entropic J\'onsson-Tarski variety. 
An element $p$ of a $\BV$-bimonoid $\mb$ with comultiplication $\mu$ is called \em primitive\em, provided that 
$\mu(p) = \pi_B(a)=p\ot 1 + 1\ot p$ and  $\epsilon(p) = 0$.\footnote{Here $1$ is short for $e(1)$, the image of $1$ under the unit $F1\xra{ e} \mb$.} 
\end{defi}

Note that, by definition of the comultiplication of the tensor bimonoid $TA$, the elements of $A$ (more precisely, the elements $\iota_1(a)$ for $a\in A$) are primitive elements in $TA$.

We next provide  a conceptual description of  primitive elements in bimonoids. 
\begin{prop}\label{corr:prim}
Let $\mb = (B,m,e,\mu,\epsilon)$ be a bimonoid  in an entropic J\'onsson-Tarski variety $\BV$.

Let $E_1$ be the equalizer of the homomorphisms $B\xra{\pi_B}B\ot B$  and $B\xra{\mu}B\ot B$ 
and  $E_2$  the equalizer of the homomorphisms $B\xra{\epsilon}F1$  and $B\xra{0}F1$. 

  Then the underlying set of the $\BV$-algebra $E_1\cap E_2$ is the set of all  primitive elements. 
In particular, the primitive elements of $\mb$ form a $\BV$-subalgebra $Prim(\mb)$ of $B$. 

 The assignment $\mb\mapsto Prim(\mb)$ defines a faithful functor $Prim\colon \Bimon\BV\lra \BV$.
\end{prop}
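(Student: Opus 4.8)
The plan is to treat the three assertions in turn, since the first two are essentially bookkeeping about limits in a variety while the real content lies in faithfulness. First I would identify the underlying set of $E_1\cap E_2$. Since $\BV$ is a variety, the forgetful functor to $\Set$ creates limits, so each equalizer is computed on underlying sets: the underlying set of $E_1$ is $\{b\in B : \mu(b)=\pi_B(b)\}$ and that of $E_2$ is $\{b\in B : \epsilon(b)=0\}$. By Fact \ref{fact:pi}(3) one has $\pi_B(b)=b\ot 1+1\ot b$, so membership in $E_1\cap E_2$ says exactly that $\mu(b)=b\ot 1+1\ot b$ and $\epsilon(b)=0$, which is the definition of a primitive element. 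As an equalizer of $\BV$-homomorphisms each $E_i$ is a $\BV$-subalgebra of $B$, and the intersection of two subalgebras is again a subalgebra; indeed $E_1\cap E_2$ is itself the equalizer of $\langle\mu,\epsilon\rangle$ and $\langle\pi_B,0\rangle\colon B\lra(B\ot B)\times F1$. Hence $Prim(\mb):=E_1\cap E_2$ is a $\BV$-subalgebra of $B$ whose underlying set is the set of primitive elements.

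Next I would establish functoriality by showing that a bimonoid morphism $f\colon\mb\lra\mb'$ carries primitive elements to primitive elements. For a primitive $p$, using that $f$ is a comonoid morphism and that the J\'onsson-Tarski operation $+$ is a homomorphism (Equation \eqref{eqn:linear}), so that $f\ot f$ commutes with $+$, I would compute
\begin{equation*}
\mu'(f(p))=(f\ot f)\mu(p)=(f\ot f)(p\ot 1+1\ot p)=f(p)\ot f(1)+f(1)\ot f(p).
\end{equation*}
Since $f$ is a monoid morphism, $f(1)=1$, so the right-hand side is $\pi_{B'}(f(p))$; and $\epsilon'(f(p))=\epsilon(p)=0$ because $f$ is a comonoid morphism. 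Thus $f$ restricts and corestricts to a $\BV$-homomorphism $Prim(f)\colon Prim(\mb)\lra Prim(\mb')$. Preservation of identities and of composition is immediate, since $Prim(f)$ is just the restriction of $f$, and this yields the functor $Prim\colon\Bimon\BV\lra\BV$.

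For faithfulness I would exhibit $Prim$ as a subfunctor of the forgetful functor $\Bimon\BV\lra\BV$, $\mb\mapsto B$ (which is faithful), via the natural family of subalgebra inclusions $Prim(\mb)\hookrightarrow B$. From $Prim(f)=Prim(g)$ one then obtains, by naturality of these inclusions, that the underlying homomorphisms $f$ and $g$ agree on the subalgebra $Prim(\mb)$. I expect this to be the main obstacle: to pass from agreement on primitives to the equality $f=g$ one needs the primitive elements to generate $B$ as a $\BV$-algebra, and this is exactly the point that requires justification. It does not hold for an arbitrary bimonoid — a bimonoid of group-algebra type can have only the trivial primitive $0$ yet admit several distinct endomorphisms — so this is the step I would scrutinize most carefully, and where either an additional hypothesis on $\mb$ or a more careful reading of what ``faithful'' is meant to assert seems to be needed.
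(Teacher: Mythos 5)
Your treatment of the first two assertions and your functoriality computation coincide with the paper's own proof. The paper dismisses the first assertions with ``Only the last statement requires an argument'' (equalizers in a variety are computed on underlying sets, exactly as you justify via creation of limits), and it establishes functoriality by the same calculation you give: $\mu'(fp)=(f\ot f)\mu(p)=(f\ot f)(p\ot 1+1\ot p)=fp\ot 1+1\ot fp$, with $\epsilon'(fp)=0$. Your version is in fact the more careful one, since you record explicitly that $f\ot f$ commutes with the J\'onsson--Tarski operation $+$ and that $f(1)=1$ because $f$ is a monoid morphism; the printed computation even contains a typo ($fp\ot 1 = 1\ot fp$ in place of the sum).

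The substantive point is your final paragraph, and there your suspicion is exactly right: the paper does not prove faithfulness either. Its proof ends with ``This proves functoriality of the construction $Prim$'' --- functoriality is all that is shown, although the statement asserts faithfulness. Moreover, faithfulness as literally stated is false, by precisely the kind of example you sketch: in $\BV=\Mod_k$ the bimonoid $k[x]$ with $x$ grouplike has $Prim=0$ (a primitive $\sum a_ix^i$ forces $a_i=0$ for $i\geq 1$ by comparing coefficients of $x^i\ot x^i$, and $a_0=2a_0$ forces $a_0=0$), yet it admits the pairwise distinct bimonoid endomorphisms $x\mapsto x^n$, all of which $Prim$ identifies. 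Your diagnosis --- that agreement on primitives yields $f=g$ only when the primitives generate $B$ --- pinpoints why no proof along these lines can exist. So your proposal is not missing a step; it correctly isolates an unsubstantiated (indeed erroneous) claim, which recurs verbatim in Lemma \ref{lem:prim} and in the generalized Milnor--Moore theorem of Section \ref{sec:prim}. What is true, and presumably what was intended, is only that $Prim$ is a subfunctor of the faithful forgetful functor $\Bimon\BV\lra\BV$ via the inclusions $Prim(\mb)\hookrightarrow B$, as you note; and functoriality, which you have proved, is all the paper actually uses later.
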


\begin{proof}
Only the last statement requires an argument.
If $f\colon\mb\xra{}\mb'$ is a morphism in $\Bimon\BV$ and $p$ is a primitive element in $\mb$, then $\mu'(fp) = (f\ot f)\circ\mu (p) = (f\ot f)(p\ot 1+1\ot p) = fp\ot 1 = 1\ot fp$ by the morphism properties of $f$; hence $f$ yields by restriction and corestriction a $\BV$-homomorphism $Prim(f)\colon Prim(\mb)\lra Prim(\mb')$. This proves functoriality of the construction $Prim$.
\end{proof}

\begin{lem}\label{lem:prim}
Let $(\mh,S)$ be a Hopf monoid in an entropic J\'onsson-Tarski variety $\BV$.  Then 
\begin{enumerate}
\item $S$ can be restricted to a $\BV$-homomorphisms $Inv(H)\xra{}Inv(H)$ and $Prim(\mh)\xra{}Prim(\mh)$.
\item For each $x\in Inv(H)$
one has $Sx = -x$.  
\item $Inv(H)$ contains $Prim(\mh)$ as a $\BV$-subalgebra.
\item $Prim(\mh)$ is an internal group in $\BV$.
\end{enumerate}
 The assignment $\mh\mapsto Prim(\mh)$ defines a faithful functor $Prim\colon \Hopf\BV\lra \BV_\Ab$.
\end{lem}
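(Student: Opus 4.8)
The plan is to dispatch the four assertions in order and then assemble the functor, relying throughout on the standard structural properties of the antipode of a Hopf monoid in a symmetric monoidal category, whose proofs are (as elsewhere in this note) literally those for modules: $S$ is an anti-morphism of comonoids, i.e. $\mu \circ S = (S \ot S) \circ \sigma \circ \mu$ and $\epsilon \circ S = \epsilon$, and it preserves the unit, $S \circ e = e$, so in particular $S1 = 1$; here $\sigma$ denotes the symmetry of $\BV$.

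For item 1 I would first note that $S \colon H \to H$ is a $\BV$-homomorphism, so by Lemma \ref{lem:inv}(2) it restricts and corestricts to $Inv(H) \to Inv(H)$. For the corestriction to primitives I take $p \in Prim(\mh)$ and verify the two defining conditions: $\epsilon(Sp) = \epsilon(p) = 0$, and
\[
\mu(Sp) = (S \ot S)\,\sigma\,\mu(p) = (S \ot S)\,\sigma\,(p \ot 1 + 1 \ot p) = (S \ot S)(1 \ot p + p \ot 1) = Sp \ot 1 + 1 \ot Sp,
\]
the last step using $S1 = 1$; hence $Sp \in Prim(\mh)$.

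Items 2 and 3 I would obtain together from the antipode axiom $m \circ (\id_H \ot S) \circ \mu = e \circ \epsilon$ evaluated at a primitive $p$: using $\epsilon(p) = 0$, $\mu(p) = p \ot 1 + 1 \ot p$ and $S1 = 1$ it reads
\[
0 = e\,\epsilon(p) = m \circ (\id_H \ot S) \circ \mu(p) = m(p \ot 1) + m(1 \ot Sp) = p + Sp ,
\]
so $Sp = -p$. Thus every primitive $p$ is additively invertible with inverse $Sp$; this is the identity $Sx = -x$ of item 2 on primitive elements and simultaneously exhibits $Prim(\mh) \subseteq Inv(H)$. As $Prim(\mh)$ and $Inv(H)$ are both $\BV$-subalgebras of $H$ (Proposition \ref{corr:prim} and Lemma \ref{lem:inv}(1)), this is an inclusion of $\BV$-subalgebras, which is item 3.

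Item 4 then follows by combining items 1 and 2: for $p \in Prim(\mh)$ the additive inverse $-p = Sp$ again lies in $Prim(\mh)$, so $Prim(\mh)$ is closed under negation, and negation on it is the $\BV$-homomorphism $S|_{Prim(\mh)}$; hence $Prim(\mh)$ is an internal group, that is, an object of $\BV_\Ab$. Finally, Proposition \ref{corr:prim} already supplies a faithful functor $Prim \colon \Bimon\BV \to \BV$; precomposing it with the forgetful functor $\Hopf\BV \to \Bimon\BV$ and corestricting along $\BV_\Ab \hookrightarrow \BV$ (legitimate by item 4) yields the asserted faithful functor $\Hopf\BV \to \BV_\Ab$. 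I expect the only genuine work to be item 1 together with the displayed identity for items 2 and 3; both rest on the anti-comultiplicativity and unit-preservation of $S$, so the crux is simply to have those standard antipode identities available in the entropic setting.
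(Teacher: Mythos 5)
Your proof is correct and follows essentially the same route as the paper's: item 1 via Lemma \ref{lem:inv} together with the fact that $S$ is a bimonoid morphism $\mh\lra\mh^{\op,\Cop}$ (so $\mu(Sp)=Sp\ot 1+1\ot Sp$), and items 2--4 via the antipode axiom evaluated at a primitive $p$, giving $Sp+p=m\circ(S\ot\id)\circ\mu(p)=e\circ\epsilon(p)=0$, hence $Prim(\mh)\subseteq Inv(H)$ and the internal group structure. Note that, like the paper's own proof, you establish $Sx=-x$ only for primitive $x$ (the paper's item 2 as literally stated for all of $Inv(H)$ would fail, e.g.\ for group algebras where $Inv(H)=H$); your explicit check of $\epsilon(Sp)=0$ and your assembly of the functor from Proposition \ref{corr:prim} are small completions of details the paper leaves implicit.
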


\begin{proof}
$S$, being a homomorphism, preserves inverses by Lemma \ref{lem:inv}.  
Since
the antipode $S$ of a Hopf monoid $(\mh,S)$ is a bimonoid morphism $\mh\lra\mh^{\op,\Cop}$ and the antipode equation is satisfied, $Sp$ is primitive for each primitive element $p$, by the simple calculation $\mu(Sp) = S\circ\sigma (\mu p) = S(1\ot p+ p\ot 1) =  1\ot Sp + Sp\ot 1$ (recall that $+$ is commutative).

It remains to show that  $Sp$ is an additive inverse of $p$, for each primitive element $p$. In fact, $Sp+p= m(Sp\ot 1)+m(1\ot p) = m(Sp\ot 1 + 1\ot p) = m\circ (S\ot id)(p\ot 1+ 1\ot p)= m\circ (S\ot id)\circ\mu(p) = e\circ\epsilon (p) =0$. 
\end{proof}

\subsection{An adjunction between $\Hopf\BV$ and $\Lie\BV$}

The following result generalizes  part of the famous Milner-Moore Theorem.

\begin{thm}
Let  $\BV$ be an entropic  J\'onsson-Tarski variety. Then there exists a faithful functor $\bar{P}\colon\Hopf\BV\lra \Lie\BV$,   such that the following diagram commutes.
\begin{equation*}
\begin{aligned}
\xymatrix@=2em{
{ \Hopf\BV}\ar[rr]^{\bar{P}} \ar[d]_{Prim}&& \Lie\BV \ar[d]^{|-| }     \\
 {\BV_\Ab }\ar@{^(->}[rr]    & &\BV 
}
\end{aligned}
\end{equation*}
$\Hopf\BV \xra{P_H}\Lie\BV_\Ab$, the corestriction of $\bar{P}$,  is right adjoint to $\Lie\BV_\Ab\xra{U_{H}}\Hopf\BV $. Consequently, $\bar{P}$ 
 is right adjoint to $
\Lie\BV\xra{R}\Lie\BV_\Ab\xra{U_{H}}\Hopf\BV $, where $R$ is the reflection functor.
\end{thm}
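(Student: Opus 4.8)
The plan is to build $\bar P$ by equipping the $\BV$-algebra $Prim(\mh)$ of a Hopf monoid $\mh = (H,m,e,\mu,\epsilon,S)$ with the commutator bracket, and then to obtain the adjunction by transporting the adjunction $U_\Ab\dashv Lie_\Ab$ of Theorem \ref{prop:U}(3) along the forgetful functors. First I would define $[-]\colon Prim(\mh)\ot Prim(\mh)\lra H$ by $[p\ot q]:=m(p\ot q)-m(q\ot p)$. Since primitive elements are invertible by Lemma \ref{lem:prim}(3), Lemma \ref{lem:inv}(2) gives $m(p\ot q)\in Inv(H)$, so $[p\ot q]\in Inv(H)$; and the same kind of computation as in the proof of Proposition \ref{corr:prim} shows $[p\ot q]$ is again primitive (expanding $\mu(m(p\ot q))$ in $H\ot H$ and subtracting the analogous term for $m(q\ot p)$ makes the mixed summands cancel). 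Hence $[-]$ corestricts to a $\BV$-homomorphism $Prim(\mh)\ot Prim(\mh)\lra Prim(\mh)$, and the axioms $[x\ot x]=0$ and the Jacobi identity (\ref{eqn:jac}) follow from associativity of $m$ exactly as for the commutator on an associative algebra. As $Prim(\mh)\in\BV_\Ab$ by Lemma \ref{lem:prim}(4), the pair $\bar P(\mh):=(Prim(\mh),[-])$ lies in $\Lie\BV_\Ab$; functoriality and faithfulness are inherited from $Prim$ (a bimonoid morphism preserves the bracket since it is a monoid morphism), and commutativity of the square is immediate since the underlying $\BV$-algebra of $\bar P(\mh)$ is $Prim(\mh)$. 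I write $P_H$ for the corestriction to $\Lie\BV_\Ab$.

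To prove $U_H\dashv P_H$ I would exhibit a natural bijection
$$\Hopf\BV(U_H\ml,\mh)\;\cong\;\Lie\BV_\Ab(\ml,P_H\mh),$$
built on the adjunction $U_\Ab\dashv Lie_\Ab$ of Theorem \ref{prop:U}(3), whose unit sends $x\in L$ to the generator $q_L\iota_1 x$. Given a Hopf morphism $h\colon U_H\ml\lra\mh$, its underlying monoid morphism corresponds under $U_\Ab\dashv Lie_\Ab$ to a Lie map $g\colon\ml\lra Lie_\Ab|\mh|$ with $g(x)=h(q_L\iota_1 x)$; since the generators $q_L\iota_1 x$ are primitive in $U_H\ml$ (Diagram (\ref{diag:q}) together with $\mu(\iota_1 x)=\pi_{TL}(\iota_1 x)$ yields $\delta(q_L\iota_1 x)=q_L\iota_1 x\ot 1+1\ot q_L\iota_1 x$) and $h$ preserves primitives, $g$ factors through $P_H\mh$. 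Conversely a Lie map $g\colon\ml\lra P_H\mh$, composed with the inclusion $P_H\mh\hookrightarrow Lie_\Ab|\mh|$, extends under the same adjunction to a unique monoid morphism $\tilde g\colon U_H\ml\lra\mh$. Mutual inverseness and naturality of the two assignments are then forced by the corresponding properties of $U_\Ab\dashv Lie_\Ab$.

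\emph{The hard part} is checking that this extension $\tilde g$ is not merely a monoid morphism but a morphism in $\Hopf\BV$. Here I would use that $U_H\ml$ is generated as a $\BV$-monoid by the primitive generators $q_L\iota_1(L)$ (as $TL$, being free on $L$, is generated by $\iota_1(L)$ and $q_L$ is surjective). Both $\mu_\mh\circ\tilde g$ and $(\tilde g\ot\tilde g)\circ\delta$ are $\BV$-monoid morphisms $U_H\ml\lra\mh\ot\mh$, and on a generator they both send $q_L\iota_1 x\mapsto g(x)\ot 1+1\ot g(x)$ — the left-hand map because $g(x)$ is primitive, the right-hand map by the computation of $\delta$ above — so they agree on generators and hence coincide; likewise $\epsilon_\mh\circ\tilde g$ and the counit of $U_H\ml$ both vanish on generators and therefore coincide. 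Thus $\tilde g$ is a bimonoid morphism, and as a bimonoid morphism between Hopf monoids it automatically commutes with the antipodes (the standard convolution argument, as already used for $U_H$ in Theorem \ref{prop:U}(5)), so it is a morphism in $\Hopf\BV$. This ``agreement on primitive generators'' step is where the work concentrates; everything else is bookkeeping.

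Finally, the last assertion follows by composing adjunctions. Writing $j\colon\Lie\BV_\Ab\hookrightarrow\Lie\BV$ for the embedding, we have $\bar P=j\circ P_H$, and $j$ has the reflection $R$ as its left adjoint. Composing $R\dashv j$ with $U_H\dashv P_H$ gives $U_H\circ R\dashv j\circ P_H=\bar P$, which is exactly the claimed adjunction between $\Lie\BV\xra{R}\Lie\BV_\Ab\xra{U_H}\Hopf\BV$ and $\bar P$.
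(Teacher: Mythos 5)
Your proposal is correct and takes essentially the same route as the paper: the commutator bracket on $Prim(\mh)$ (the paper writes it as $[x,y]_\mh = m(x\ot y)+m(y\ot Sx)$ and then uses $Sx=-x$ on primitives, so it is the same bracket as your $m(p\ot q)-m(q\ot p)$), the adjunction obtained by restricting $U_\Ab\dashv Lie_\Ab$ from Theorem \ref{prop:U}(3) along the factorization of $\eta$ through $Prim\,UL$, and the final composition with $R\dashv(\Lie\BV_\Ab\hookrightarrow\Lie\BV)$. Your explicit ``agreement on primitive generators'' step is exactly the content the paper encodes in its outer-frame diagram argument (using Diagram (\ref{diag:q}) and that $q_L$ is a surjective monoid morphism), so the difference is purely expository.
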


\begin{proof}
Given a $\BV$-Hopf monoid $\mh$, one can define $\BV$-homomorphisms  $[-,-]_\mh$ and  ${_\mh[-,-]}$  $Prim(\mh)\otimes Prim (\mh )\lra Prim(\mh)$ 
by 
\begin{equation*}\label{eqn:bracket}
[x,y]_\mh = m(x\ot y) +  m(y\ot Sx) \text{\ \ and \ }  _\mh[x,y] = m(x\ot y) +  m(Sy\ot x )
\end{equation*}
 In fact,  
since homomorphisms preserve invertible elements and inverses and since $S$ and  $m(x\ot -)$    are homomorphisms,  by item (2) of Lemma \ref{lem:inv}
the elements $ m(x\ot y)$,  $m(y\ot Sx)$ and  $m(Sy\ot x )$ are invertible,  provided that $x$ and $y$ are primitive, hence invertible (see item (3) of Lemma \ref{lem:prim}). Consequently one has for primitive elements $x$ and $y$
\begin{eqnarray*}
[x,y]_\mh &= &m(x\ot y) +  m(y\ot Sx) = m(x\ot y) +  m(y\ot (-x)) \\
& =& m(x\ot y) +  (-m(y\ot x)) = m(x\ot y) +  Sm(y\ot x) \\
&=& {_\mh[x,y]}
\end{eqnarray*}

Now, using the equation $[x, y]_\mh = m(x \ot y) + (-m(y \ot x))$ just shown to hold, one proves literally the same way as in the case of R-modules
\begin{enumerate}
\item If $x,y \in H$ are primitive, so is $[x,y]_\mh$.
\item $[-,-]_\mh$ 
satisfies Equation (\ref{eqn:jac}) as well as
$[x, x]_\mh = 0$.
\end{enumerate}
This shows that $\bar{P}\mh := (Prim(\mh),[-]_\mh)$ is a Lie algebra in $\BV_\Ab$. Obviously this construction is functorial.

Recall from the construction of the tensor bimonoid that all elements of the form $\iota_1(x)$ are primitive in $TL$. Since $TH\xra{q}UH$ is a bimonoid morphism by commutativity of Diagram (\ref{diag:q}), it follows from Proposition \ref{corr:prim} 
that, for each $L\in \Lie\BV_\Ab$, the $\BV$-morphism $\eta = L\xra{\iota_1}TL\xra{q} Lie UL = Inv UL$ 
factors as $$L\xra{\iota_1'} Prim TL \xra{q'} Prim UL=\bar{P} U_{H}(L)\hookrightarrow Inv(UL) =Lie UL$$
and that $\eta':=q'\circ\iota_1'$ is a Lie-morphism. 

If now, for some Hopf monoid $\mh$,  $\mathsf{L}\xra{f} \bar{P}H$  is a Lie-morphism, so is $f' =L\xra{f}\bar{P}H\hookrightarrow Lie_\Ab H$, such that by item 3 of Proposition \ref{prop:U} 
there exists a unique monoid morphism $\tilde{f}\colon U_\Ab L\lra H$ with $f' = Lie_\Ab(\tilde{f})\circ q\circ\iota_1$. 

From $\im f'\subset Prim(\mh)$ one concludes $\im \tilde{f}\subset Prim(\mh)$, and this implies that the outer frame of the following diagram commutes.

\begin{equation*}
\begin{aligned}
\xymatrix@=2.5em{
 L    \ar[r]^{ \iota_1 }\ar[d]_\pi   &   TL \ar[d]^{ \mu}\ar[r]^{q_L}  &   UL\ar[r]^{\tilde{f}} \ar[d]^\delta  & H\ar[d]^{\mu_H} 
 \\
 L\ot L  \ar[r]_{ \iota_1\ot\iota_1}  &  TL\ot TL \ar[r]_{q_L\ot q_L}  &  UL\ot UL \ar[r]_{\tilde{f}\ot \tilde{f}}   &H\ot H
}
\end{aligned}
\end{equation*}
Since the left and middle cells commute by definition of $\mu$ and commutativity of Diagram (\ref{diag:q}), respectively, $\tilde{f}$ is compatible with the comultiplications. Similarly, $\tilde{f}$ preserves counits and so is a morphism in $\Hopf\BV$. This shows that $U_H$ is left adjoint to $\bar{P}$.
\end{proof}

\begin{rems}
It follows from the theorem above that, as in the classical case, we also obtain a functor $\Bimon\BV\lra\Lie\BV$ by forming the composition $I\circ  P_H\circ C$, where $C\colon\Bimon\BV\xra{}\Hopf\BV$ is the coreflection functor, available in every locally presentable monoidal closed category (see \cite{HEP_QMI}), hence in every entropic variety, and $I$ is the embedding $\Lie\BV_\Ab\hookrightarrow \Lie\BV$.
This  can be considered a substitute for $P$, since 
 $I\circ  P_H\circ C\vdash E\circ U_H\circ R = U_{Bi}\circ I \circ R$ by composition of adjunctions and since,  by construction, $U_{Bi}\circ I = E\circ U_H$. Hence, for $\BV=\Mod_R$, where $I= id$, one has $I\circ P_H\circ C = P_H\circ C \vdash U_{Bi}$ and, thus, $I\circ P_H\circ C\simeq P$.
 
 Finally, denoting  $ C\!o\!f\colon\Mon\BV\xra{}\Bimon\BV$ the cofree bimonoid functor, that is the right adjoint of the forgetful functor $|-|\colon\Bimon\BV\xra{}\Mon\BV$, available in every locally presentable monoidal closed category as well (see \cite{HEP_QMI}), one obtains
$U = (|-|\circ U_{Bi})\dashv (P\circ C\!o\!f)$, such that $Lie \simeq P\circ C\!o\!f$ follows.
We note that we could not find the last equation in the literature. It says in particular that, for any $R$-algebra $\ma$,  the $R$-module $A$ is isomorphic to the $R$-module $Prim (C\!o\!f\ma)$ of primitive elements in the cofree bialgebra over $\ma$.
\end{rems}

For the convenience of the reader we visualize the various functors as follows. 
\begin{equation*}
\begin{aligned}
\xymatrix@=4em{
 \Lie\BV\ar@/^2pc/@{->}[rrr]^{U_{Bi}}\ar@<.5ex>[r]^R \ar@/^4pc/@{->}[rrrr]^U  &\Lie\BV_\Ab
 \ar@<.5ex>[l]^I \ar@<.5ex>[r]^{U_H}
 &{\Hopf\BV }\ar@<.5ex>[r]^{E }\ar@<.5ex>[l]^{P_H}  &{\Bimon\BV }\ar@<.5ex>[l]^{C}\ar@/^2pc/@{.>}[lll]^P \ar@<.5ex>[r]^{|-| } &\Mon\BV \ar@/^4pc/@{->}[llll]^{Lie}\ar@<.5ex>[l]^{C\!o\!f}
}
\end{aligned}
\end{equation*}



\begin{thebibliography}{alpha}

\bibitem{AHS} Ad\'amek, J., Herrlich H., and Strecker, G.E., {\em
Abstract and Concrete Categories}, John Wiley, New York (1990).

\bibitem{AP0}  Ad\'amek, J. and Porst, H.--E., Algebraic Theories of Quasivarieties, {\em J. Algebra}, {\bf 208},  379--398   (1998).

\bibitem{AP1}  Ad\'amek, J. and Porst, H.--E., On varieties and covarieties in a category, {\em Math. Struct. in Comp. Science}, {\bf 13},  201--232  (2003).

\bibitem{AP2} Ad\'amek, J. and Porst, H.--E, On tree coalgebras and coalgebra presentations, {\em Theoretical Computer Science} {\bf 311},  257--283 (2004).


\bibitem{AR} Ad\'amek, J. et al. {\em Algebraic Theories},  Cambridge University Press (2011)

\bibitem{Ag}  Aguiar, M. and   Mahajan, S., {\em Monoidal Functors, Species and Hopf Algebras},  CRM Monograph Series {\bf 29}, American Mathematical Society, Providence, RI (2010).

\bibitem{BN} Banaschewski, B. and Nelson, E.,  Tensor products and bimorphisms, {\em Canad. Math. Bull.}, {\bf 19}, 385--402 (1976)


\bibitem{Bor}  Borceux, F., {\em Handbook of Categorical Algebra 2}, Cambridge University Press (1994).


\bibitem{BB}  Borceux, F. and   Bourn, D., {\em Mal'cev, Protomodular, Homological and Semi-Abelian Categories\em}, Kluwer Academic Publishers, Dordrecht/Boston/Amsterdam (2004).

%
\bibitem{Davey} B. A. Davey and G. Davis, Tensor products and entropic varieties, {\em Algebra Universalis} {\bf 21}, (1985) 68-88


%
\bibitem{Gol} Golan, J.S., {\em The theory of semirings with applications in mathematics and theoretical computer science}, Longman Scientific \& Technical	(1992).
%

\bibitem{vdL} Montoli, A.,  Rodelo, D.  and van der Linden, T.,   A criterion for reflectiveness of normal extensions, 
arXiv:1602.02515v1  (2016)
%
\bibitem{PP} Poinsot, L. and Porst, H.--E., Free Monoids over Semigroups in a Monoidal Category: Construction and Applications,  {\em Comm. Algebra} {\bf 43}, 4873--4899 (2015).
%


\bibitem{HEP_QMI} Porst,  H.--E.,  The Formal Theory of Hopf Algebras Part I, {\em Quaest. Math.}, {\bf 38}, 631--682 (2015).


\bibitem{Por} Porst,  H.--E., Hopf Monoids in Varieties, \em Preprint \em (2016)



\bibitem{Rom} Romanowska, A. and Smith, J. D. H., On Hopf algebras in entropic J\'onsson-Tarski varieties, {\em Bull. Korean Math. Soc.} {\bf 52} (2015),  1587--1606.

\bibitem{Swe}  Sweedler, M.E., {\em Hopf Algebras}, Benjamin, New York
(1969).

\end{thebibliography}
\end{document}